\documentclass[10pt,leqno]{amsart}
\usepackage{graphicx}
\usepackage{indentfirst}

\usepackage[T1]{fontenc}
\usepackage[utf8]{inputenc}
\usepackage{lmodern}
\usepackage{amssymb,amsthm,amsmath,mathtools}
\usepackage{xcolor,paralist,hyperref,titlesec,fancyhdr,etoolbox}
\usepackage{mathrsfs}
\usepackage{enumitem}
\usepackage{microtype}
\allowdisplaybreaks

\newtheorem{theorem}{Theorem}[section]

\newtheorem{lemma}[theorem]{Lemma}
\newtheorem{proposition}[theorem]{Proposition}
\newtheorem{corollary}[theorem]{Corollary}

\theoremstyle{remark}
\newtheorem{remark}[theorem]{Remark}

\titleformat{\section}[block]{\normalfont\Large\bfseries\centering}{\thesection.}{.5em}{}
\titlespacing*{\section}{0pt}{2.0ex plus .3ex minus .2ex}{1.2ex plus .2ex}

\hypersetup{ colorlinks=true, linkcolor=black, citecolor=black, filecolor=black, urlcolor=black }
\numberwithin{equation}{section}

\newcommand{\AuthorOneName}{Yaoran Yang}
\newcommand{\AuthorOneShort}{Y. Yang}
\newcommand{\AuthorOneAffiliation}{School of Mathematics, Sichuan University, Chengdu, 610065, China}
\newcommand{\AuthorOneEmail}{yangyaoran@stu.scu.edu.cn}
\newcommand{\AuthorOneORCID}{ORCID iD: 0009-0004-2832-9163}

\newcommand{\AuthorTwoName}{Yutong Zhang}
\newcommand{\AuthorTwoShort}{Y. Zhang}
\newcommand{\AuthorTwoAffiliation}{School of Mathematics, Sichuan University, Chengdu, 610065, China}
\newcommand{\AuthorTwoEmail}{yutongzhang@stu.scu.edu.cn}
\newcommand{\AuthorTwoORCID}{ORCID iD: 0009-0000-1220-0702; corresponding author}

\newcommand{\Fp}{\mathbb F_p}
\newcommand{\Z}{\mathbb Z}
\newcommand{\Q}{\mathbb Q}
\newcommand{\C}{\mathbb C}
\newcommand{\eps}{\varepsilon}
\newcommand{\leg}[2]{\left(\frac{#1}{#2}\right)}

\newcommand{\adj}{\operatorname{adj}}
\newcommand{\diag}{\operatorname{diag}}

\newcommand{\tr}{\operatorname{tr}}
\newcommand{\QR}{\operatorname{QR}}

\newcommand{\bone}{\mathbf 1}

\newcommand{\calU}{\mathcal U}

\begin{document}

\title[Two determinant evaluations in Sun's conjectures]{Two determinant evaluations in Sun's conjectures involving Legendre symbols}

\author[\AuthorOneShort]{\AuthorOneName}
\address{\AuthorOneAffiliation}
\email{\AuthorOneEmail}
\thanks{\AuthorOneORCID}

\author[\AuthorTwoShort]{\AuthorTwoName}
\address{\AuthorTwoAffiliation}
\email{\AuthorTwoEmail}
\thanks{\AuthorTwoORCID}

\date{\today}
\subjclass[2020]{Primary 11A15, 11C20; Secondary 15A15.}
\keywords{Legendre symbol, determinant, quadratic residue, Vandermonde determinant, Cauchy determinant, Sun's conjectures.}

\begin{abstract}
We prove two determinant evaluations attached to Sun's conjectures on
matrices of Legendre symbols.  The first one resolves the
\(p\equiv1\pmod4\) part of Conjecture 4.8(i) by reducing the determinant
with four indeterminates to a four-entry inverse package for the adjacent
minor
\([\chi(j-k+1)]_{0\le j,k<(p-1)/2}\).  The core evaluation is
\[
\det H=\leg{2}{p}(b'_p-a'_p),\qquad
U^TH^{-1}U=
\begin{pmatrix}
\leg{2}{p}\dfrac{pb'_p-a'_p}{b'_p-a'_p}&1\\[2mm]
\dfrac{b'_p-a'_p-1}{b'_p-a'_p}&1
\end{pmatrix},
\]
where \(U=(\mathbf1,\eta)\) and \(\eta_j=\chi(j)\).  The proof uses
Vsemirnov's factorisation of Chapman's matrix and an adjacent cofactor
calculation.  The second result gives a uniform exact congruence modulo
\(p\) for the determinant underlying Sun's Conjecture 4.10(i), valid for
any ordered half-system modulo sign and all \(u,v\in\mathbb F_p\).  Its
standard specialization recovers the asserted square class.  The
square-class assertion itself also follows from Sun's earlier evaluation of
\(T(d,p)\); the contribution here is an exact and half-system refinement.
\end{abstract}

\maketitle

\section{Introduction and main results}

Let \(p\) be an odd prime and let
\(
\chi(t)=\leg{t}{p}
\)
be the Legendre symbol, with \(\chi(0)=0\).  Determinants with character
entries go back at least to Lehmer's character matrices and Carlitz's
cyclotomic matrices \cite{Lehmer1956,Carlitz1959}.  The modern form used
here is closer to the Chapman--Vsemirnov line of Legendre-symbol matrices
\cite{Chapman2004,Vsemirnov2012,Vsemirnov2013}.  Sun's work on determinants
with Legendre-symbol entries, especially \cite{Sun2019,Sun2024}, contains a
large collection of exact determinant conjectures and square-class
conjectures.  This paper gives a unified, appendix-free presentation of two
\(p\equiv1\pmod4\) evaluations: an exact four-variable identity from
Conjecture 4.8 and an exact uniform refinement of the determinant appearing
in Conjecture 4.10(i).

The first result belongs to the Chapman--Vsemirnov determinant method.  For
standard background on Gauss sums, Jacobi sums, and the class-number
normalisations used below we refer to Berndt--Evans--Williams
\cite{BerndtEvansWilliams1998}.  We shall also use standard determinant
calculus tools, in particular Sylvester's determinant identity, the matrix
determinant lemma, Cauchy-type determinants and Vandermonde factorizations;
convenient references are Krattenthaler's surveys
\cite{Krattenthaler1999,Krattenthaler2005}.  Let \(h(p)\) be the class
number of the real quadratic field
\(\Q(\sqrt p)\), and let \(\eps_p>1\) be its fundamental unit.  Put
\[
        L=\leg{2}{p},\qquad
        \eps_p^{(2-L)h(p)}=a'_p+b'_p\sqrt p,\qquad
        2a'_p,2b'_p\in\Z .
\]
For later use we also write
\[
        A=a'_p,\quad B=b'_p,
        \qquad Q=B-A,\qquad P=pB-A .
\]

\begin{theorem}[Sun's Conjecture 4.8(i)]\label{thm:48}
Let \(p>3\) be a prime with \(p\equiv1\pmod4\), set
\(n=(p-1)/2\), and let \(x,y,z,w\) be indeterminates.  Then
\[
\begin{aligned}
&\det\left[
 x+\leg{j-k+1}{p}+\leg{j}{p}y+\leg{k}{p}z+\leg{jk}{p}w
\right]_{0\le j,k\le n-1}                                      \\[1mm]
&=(pb'_p-a'_p)((w+1)x-yz)+L(wx-(y+1)z)                          \\
&\qquad +L(b'_p-a'_p)\{w(1-x)+(y+1)(z+1)\} .
\end{aligned}
\]
\end{theorem}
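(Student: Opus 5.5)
The plan is to treat the matrix in Theorem~\ref{thm:48} as a rank-two perturbation of the adjacent minor
\(H=[\chi(j-k+1)]_{0\le j,k\le n-1}\). With \(\bone\) the all-ones vector and \(\eta_j=\chi(j)\) (so \(\eta_0=0\)), we have \(\chi(jk)=\eta_j\eta_k\), and the matrix is
\[
H+x\bone\bone^T+y\eta\bone^T+z\bone\eta^T+w\eta\eta^T
=H+U C V^T,\qquad U=V=(\bone,\eta),\quad C=\begin{pmatrix}x&z\\ y&w\end{pmatrix}.
\]
The matrix determinant lemma then gives
\[
\det(H+UCU^T)=\det H\cdot\det\bigl(I_2+C\,U^TH^{-1}U\bigr).
\]
So the theorem reduces to two facts: the value of \(\det H\), and the \(2\times2\) matrix \(M=U^TH^{-1}U\). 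These are exactly the four-entry package announced in the abstract:
\[
\det H=LQ,\qquad M=\begin{pmatrix}LP/Q&1\\ (Q-1)/Q&1\end{pmatrix}.
\]
Once these are available the rest is bookkeeping. Writing \(M=\begin{pmatrix}m_{11}&m_{12}\\ m_{21}&m_{22}\end{pmatrix}\), expand
\[
\det(I+CM)=1+\tr(CM)+\det C\det M,
\]
where \(\tr(CM)=x m_{11}+z m_{21}+y m_{12}+w m_{22}\) and \(\det M=(LP-Q+1)/Q\). Multiplying by \(LQ\) and using \(L^2=1\) should give the stated polynomial. I would check coefficients against the right-hand side: the \(x\)-coefficient is \(P+\dots\), the \(xw\)-coefficient is \(P+L-LQ\), and so on.

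The substantive work is computing \(\det H\) and the four bilinear forms \(\bone^TH^{-1}\bone\), \(\bone^TH^{-1}\eta\), \(\eta^TH^{-1}\bone\), \(\eta^TH^{-1}\eta\). For the determinant, \(H\) is (after reindexing) an adjacent minor of Chapman's matrix \([\chi(j+k\pm\cdot)]\) or its Toeplitz variant. Vsemirnov's factorisation of Chapman's \(n\times n\) or \((n+1)\times(n+1)\) matrices into Cauchy/Vandermonde-type pieces over \(\Q(\zeta_p)\) yields their determinants in terms of \(\eps_p^{h(p)}\). Via Sylvester's identity, the determinants of \(H\) and of its bordered versions are expressible through those Chapman determinants and their adjacent cofactors.

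Concretely, each entry of \(M\) will be realised as a ratio \(\det\begin{pmatrix}H&u\\ v^T&0\end{pmatrix}/\det H\) with \(u,v\in\{\bone,\eta\}\). The bordered determinants should be recognised as adjacent cofactors of Chapman-type matrices of size \(n+1\), using the facts that \(\sum_t\chi(t)=0\) and \(\chi(-1)=1\) for \(p\equiv1\pmod4\). These relations let the border rows \(\bone\) and \(\eta\) be produced from a shifted row \(\chi(j-k+1)\) at \(j=n\) or \(j=-1\) after row operations.

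The main obstacle is this adjacent cofactor calculation. One must track exactly which minors of Vsemirnov's factorised matrices appear, and evaluate them, presumably by deleting a row and column from the Cauchy and Vandermonde factors and using Gauss-sum evaluations to land on \(a'_p,b'_p\) with the \((2-L)h(p)\) exponent. My first attempt would be to get the two "easy" entries \(m_{12}=m_{22}=1\) first. These should follow from a linear relation such as \(H\mathbf{v}=\bone\) for an explicit \(\mathbf v\) built from \(\eta\), coming from the character-sum identity \(\sum_k\chi(k)\chi(j-k+1)=-1\) restricted to a half-system. That leaves only \(m_{11}\) and \(m_{21}\) to be extracted from genuine Vsemirnov cofactors. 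The same cofactor evaluation would also give \(\det H=LQ\).
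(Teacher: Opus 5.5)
\textbf{There is a genuine gap: your reduction is the paper's, but the four-entry package it rests on is never derived, and the one concrete mechanism you offer does not work.}

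Your outer reduction is the paper's own. You write the matrix as $H+UCU^T$, apply Sylvester's identity, and expand $\det(I_2+CM)=1+\tr(CM)+\det C\det M$. Granted $\det H=LQ$ and the stated $M$, your coefficient check is correct. But that package is the entire content of the theorem.

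Your mechanism for the "easy" entries points the wrong way. Both $m_{12}=\bone^TH^{-1}\eta$ and $m_{22}=\eta^TH^{-1}\eta$ need $H^{-1}\eta$. A relation $H\mathbf v=\bone$ would instead compute $H^{-1}\bone$, which gives the hard entries $m_{11},m_{21}$. Those are not integers in general: for $p=13$ one has $A=18$, $B=5$, so $m_{11}=47/13$ and $m_{21}=14/13$. Hence no such $\mathbf v$ with entries built from $\eta$ exists. Also, $\sum_k\chi(k)\chi(j-k+1)=-1$ holds only when $k$ runs over a full period, not over $0\le k<n$.

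The correct observation is trivial. Column $k=1$ of $H$ is $(\chi(j))_{0\le j<n}=\eta$, so $H^{-1}\eta=e_1$, giving $m_{12}=1$ and $m_{22}=\chi(1)=1$. Separately, $v^TH^{-1}u=-\det\begin{pmatrix}H&u\\ v^T&0\end{pmatrix}/\det H$; your ratio is missing this sign.

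For $\det H$, $m_{11}$ and $m_{21}$ you only say that bordered determinants ``should be recognised'' as adjacent cofactors after unspecified row operations. The missing idea is to put the perturbation inside the Chapman matrix rather than in a border:
\begin{itemize}
\item Because $\chi(-1)=1$ and $n$ is even, $\det(H+t\bone\bone^T)$ is exactly the cofactor of the $(0,n)$ entry of $C(t)=[t+\chi(s-r)]_{0\le r,s\le n}$.
\item Likewise $\det(H+\rho\bone\eta^T)$ is the same cofactor of $C_\rho=[\chi(s-r)+\rho\chi(s)]_{0\le r,s\le n}$.
\item In Vsemirnov's factorisation $C(t)=\lambda VD\,\calU(t)\,DV$, the parameter $t$ changes only the $(0,0)$ entry of the middle factor.
\item Using the explicit Vandermonde cofactors attached to indices $0$ and $n$, this cofactor is a fixed scalar times $\omega^T\adj(\calU(t))\alpha=\det(\calU(t)+\alpha\omega^T)-\det\calU(t)$.
\item After conjugating by $G$, this reduces to the diagonal Cauchy determinants $I,J$. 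Their evaluation through $\prod_r(1\pm x_r)^2$ is where $Bp\pm A\sqrt p$, and hence $\eps_p^{(2-L)h(p)}$, enter.
\item Vsemirnov's normalising product then gives $E(t)=Pt+LQ$.
\end{itemize}
This yields $\det H=E(0)=LQ$ and $m_{11}=E'(0)/E(0)=LP/Q$. The parallel computation for $C_\rho$ gives $\det(H+\rho\bone\eta^T)=L\{Q+(Q-1)\rho\}$, hence $m_{21}=(Q-1)/Q$. Without this, or an equally explicit cofactor evaluation, your argument reduces the theorem to the package but does not prove it.
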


The proof of Theorem \ref{thm:48} is organized around an adjacent-minor
inverse package.  This has independent value because it displays exactly
why the final polynomial has degree at most two in the four parameters.

The second result is elementary in comparison.  We work over \(\Fp\) and
prove a formula that is uniform in the chosen representatives modulo sign
and in two parameters.

\begin{theorem}[Uniform exact refinement of the determinant in Conjecture 4.10(i)]\label{thm:410-uniform}
Let \(p=4m+1\) be prime, put \(n=2m\), and let
\(A_0=(a_0,\ldots,a_n)\in\Fp^{n+1}\) satisfy
\[
        a_i^2\ne a_j^2\qquad(0\le i<j\le n).
\]
For \(u,v\in\Fp\), define
\[
D_{A_0}(u,v)=
\det\left[
 \chi(a_i+a_j)+\chi(a_i-a_j)+u\,\chi(a_i^2+v a_j^2)
\right]_{0\le i,j\le n}.
\]
Then
\[
D_{A_0}(u,v)\equiv
(-1)^{m(2m+1)}u^{2m+1}v^{m(2m+1)}
\left(\prod_{r=0}^{2m}\binom{2m}{r}\right)
\prod_{0\le i<j\le 2m}(a_j^2-a_i^2)^2
\pmod p .
\]
\end{theorem}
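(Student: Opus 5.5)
The plan is to replace the Legendre symbol by a polynomial via Euler's criterion and then factor the resulting matrix as a product of two Vandermonde matrices and a sparse coefficient matrix. Since \(p=4m+1\) we have \((p-1)/2=2m\ge 2\), so \(\chi(t)\equiv t^{2m}\pmod p\) for every \(t\in\Fp\). This includes \(t=0\), because \(0^{2m}=0=\chi(0)\). Hence, modulo \(p\), the \((i,j)\) entry of the matrix defining \(D_{A_0}(u,v)\) equals
\[
(a_i+a_j)^{2m}+(a_i-a_j)^{2m}+u\,(a_i^2+v a_j^2)^{2m}.
\]
Put \(x_i=a_i^2\). In the first two terms the odd powers of \(a_j\) cancel, so with \(r=2s\) they give
\[
\sum_{s=0}^{m}2\binom{2m}{2s}x_i^{m-s}x_j^{s}.
\]
The last term gives
\[
u\sum_{r=0}^{2m}\binom{2m}{r}v^r x_i^{2m-r}x_j^{r}.
\]
So each entry is a polynomial in \(x_i,x_j\) of bidegree at most \((2m,2m)\).

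Let \(V=[x_i^{k}]_{0\le i,k\le 2m}\), which is a square \((2m+1)\times(2m+1)\) Vandermonde matrix since \(n+1=2m+1\). Then the reduced matrix equals \(VCV^{T}\), where \(C=[c_{kl}]_{0\le k,l\le 2m}\) is read off from the coefficients of \(x_i^k x_j^l\):
\begin{itemize}
\item \(c_{kl}=u\binom{2m}{l}v^l\) if \(k+l=2m\);
\item \(c_{kl}=2\binom{2m}{2l}\) if \(k+l=m\) (this requires \(l\le m\));
\item \(c_{kl}=0\) otherwise.
\end{itemize}
These two supports are disjoint because \(m\ge1\). Therefore
\[
D_{A_0}(u,v)\equiv \det(V)^2\det C=\prod_{0\le i<j\le 2m}(x_j-x_i)^2\,\det C \pmod p .
\]
This already produces the factor \(\prod(a_j^2-a_i^2)^2\). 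The hypothesis \(a_i^2\ne a_j^2\) is not needed for the congruence itself; it only ensures that this factor is nonzero.

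It remains to compute \(\det C\). Reverse the column order by setting \(l'=2m-l\); this multiplies the determinant by \((-1)^{\binom{2m+1}{2}}=(-1)^{m(2m+1)}\). After reversal, the antidiagonal \(k+l=2m\) becomes the main diagonal \(k=l'\), with diagonal entry \(u\binom{2m}{l}v^{l}\) in the row \(k=2m-l\). The second support \(k+l=m\) becomes \(l'=k+m\), which lies strictly above the diagonal. So the reversed matrix is upper triangular, and
\[
\det C=(-1)^{m(2m+1)}\prod_{l=0}^{2m}u\binom{2m}{l}v^{l}
=(-1)^{m(2m+1)}u^{2m+1}v^{m(2m+1)}\prod_{r=0}^{2m}\binom{2m}{r},
\]
using \(\sum_{l=0}^{2m}l=m(2m+1)\). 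Combining this with the previous display gives the stated congruence.

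The only delicate points are bookkeeping:
\begin{itemize}
\item checking that \(\chi(0)=0\) agrees with \(0^{2m}\), so Euler's criterion can be applied entrywise even when \(a_i=\pm a_j\) or \(a_i^2+va_j^2=0\);
\item getting the sign of the column reversal right;
\item confirming that the \(k+l=m\) band really falls strictly on one side of the diagonal, so that the \(u\)-independent part contributes nothing to the determinant.
\end{itemize}
The third point is the structural reason the answer is a pure monomial in \(u\) and \(v\).
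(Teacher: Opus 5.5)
Your proposal is correct and matches the paper's proof: Euler's criterion entrywise, the factorisation \(M=VBV^T\) with the two-antidiagonal coefficient matrix, and \(\det M=(\det V)^2\det B\). The only difference is cosmetic. You evaluate \(\det B\) by reversing columns to get an upper triangular matrix, whereas Lemma~\ref{lem:twoanti} uses a Leibniz-expansion argument to show that only the long-antidiagonal permutation survives. Your remark that the distinctness hypothesis is not needed for the congruence itself is also correct.
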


Taking the standard half-system \(A_0=(0,1,\ldots,2m)\) and
\(u=\delta_1, v=\delta_2\in\{\pm1\}\) gives an exact congruence whose
square-class consequence is the assertion appearing in Sun's Conjecture
4.10(i).

\begin{corollary}[Exact refinement of Sun's Conjecture 4.10(i)]\label{cor:410}
Let \(p\equiv1\pmod4\) be prime.  For all
\(\delta_1,\delta_2\in\{\pm1\}\),
the residue class
\[
2\det\left[
 \leg{j+k}{p}+\leg{j-k}{p}
 +\delta_1\leg{j^2+\delta_2k^2}{p}
 \right]_{0\le j,k\le(p-1)/2}
\]
is a nonzero quadratic residue modulo \(p\).
More precisely, if \(p=4m+1\), then
\[
\begin{aligned}
&\det\left[
 \leg{j+k}{p}+\leg{j-k}{p}
 +\delta_1\leg{j^2+\delta_2 k^2}{p}
\right]_{0\le j,k\le2m}                                      \\
&\quad\equiv
(-1)^{m(2m+1)}
\delta_1^{2m+1}\delta_2^{m(2m+1)}
\left(\prod_{r=0}^{2m}\binom{2m}{r}\right)
\prod_{0\le i<j\le2m}(j^2-i^2)^2
\pmod p .
\end{aligned}
\]
\end{corollary}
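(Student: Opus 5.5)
The corollary is a specialization of Theorem \ref{thm:410-uniform}, followed by a square-class count. Take $A_0=(0,1,\ldots,2m)$ with $a_i=i$. The hypothesis $a_i^2\ne a_j^2$ for $0\le i<j\le 2m$ holds because $i^2\equiv j^2\pmod p$ forces $i\equiv\pm j$. Here $i\ne j$ and $0<i+j\le 4m<p$, so neither is possible.

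Put $u=\delta_1$ and $v=\delta_2$. Then the entries of the matrix in Theorem \ref{thm:410-uniform} are exactly $\leg{j+k}{p}+\leg{j-k}{p}+\delta_1\leg{j^2+\delta_2k^2}{p}$ for $0\le j,k\le 2m=(p-1)/2$. The congruence in the corollary is therefore literally the displayed formula of Theorem \ref{thm:410-uniform}. The integer determinant is reduced modulo $p$, and $\delta_1,\delta_2$ are viewed in $\Fp$.

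For the quadratic-residue claim, I would check that $2$ times the right-hand side is a nonzero square in $\Fp$, one factor at a time.

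\begin{itemize}
\item \emph{Sign factors.} Since $p\equiv1\pmod 4$, $-1$ is a square. Hence $(-1)^{m(2m+1)}$, $\delta_1^{2m+1}$ and $\delta_2^{m(2m+1)}$ are all squares (each is $\pm1$).
\item \emph{Vandermonde factor.} $\prod_{i<j}(j^2-i^2)^2$ is a square, and it is nonzero by the distinctness shown above.
\item \emph{Binomial factor.} It remains to show that $2\prod_{r=0}^{2m}\binom{2m}{r}$ is a nonzero square mod $p$. Nonvanishing holds because $2m<p$, so no binomial coefficient $\binom{2m}{r}$ is divisible by $p$.
\end{itemize}

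Pairing $r$ with $2m-r$ gives
$$\prod_{r=0}^{2m}\binom{2m}{r}=\binom{2m}{m}\prod_{r<m}\binom{2m}{r}^2,$$
so the binomial factor reduces to showing that $2\binom{2m}{m}$ is a square mod $p$.

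This last step is the main obstacle. Write $\binom{2m}{m}=(2m)!/(m!)^2$. Since $p\equiv1\pmod 4$, Wilson's theorem gives $((2m)!)^2\equiv -1$, so $(2m)!$ is a square root of $-1$. Thus I need $2(2m)!$ to be a square, i.e.\ I need $\leg{2(2m)!}{p}=1$. I would evaluate this symbol by Gauss's lemma, or by the classical identity $\leg{(2m)!}{p}=\leg{2}{p}$ for $p\equiv1\pmod4$. With that identity, $\leg{2(2m)!}{p}=\leg{2}{p}^2=1$.

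I would prove the identity directly as follows. Let $q=(2m)!$, so $q^2\equiv-1$ and $q$ is a primitive fourth root of unity. Then $\leg{q}{p}=q^{(p-1)/2}=q^{2m}=(q^2)^m=(-1)^m$. On the other side, $\leg{2}{p}=(-1)^{(p^2-1)/8}=(-1)^{m(2m+1)}=(-1)^m$. The two agree, so $2(2m)!$ is a nonzero square mod $p$, and the corollary follows.
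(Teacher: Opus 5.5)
Your proof is correct and follows the paper's route. You specialize Theorem \ref{thm:410-uniform} to $A_0=(0,1,\ldots,2m)$, $u=\delta_1$, $v=\delta_2$, and then get $\chi(2P_m)=1$ from Wilson's theorem $((2m)!)^2\equiv-1$, Euler's criterion $\chi((2m)!)=(-1)^m$, and $\chi(2)=(-1)^m$. The only cosmetic difference is that you reduce $\prod_{r}\binom{2m}{r}$ to $\binom{2m}{m}$ by pairing $r$ with $2m-r$, whereas the paper writes it as $((2m)!)^{2m+1}/(\prod_r r!)^2$; both reduce to the same symbol $\chi((2m)!)$.
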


We record the relation between Corollary \ref{cor:410} and Sun's earlier
work.  For \(p=4m+1\), put
\[
        T(d,p)=\det\left[\chi(j^2+d k^2)\right]_{0\le j,k\le 2m}.
\]
Sun proved in \cite{Sun2019} that
\[
\left(\frac{T(d,p)}p\right)=
\begin{cases}
\left(\dfrac2p\right),& \left(\dfrac dp\right)=1,\\[1mm]
1,&\left(\dfrac dp\right)=-1.
\end{cases}
\]
On the other hand, the same two-antidiagonal observation used in the proof
of Theorem \ref{thm:410-uniform} gives, for the standard half-system,
\[
\det\left[
 \chi(j+k)+\chi(j-k)+\delta_1\chi(j^2+\delta_2k^2)
\right]_{0\le j,k\le2m}
\equiv \delta_1^{2m+1}T(\delta_2,p)\pmod p .
\]
Since \(p\equiv1\pmod4\), both \(1\) and \(-1\) are quadratic residues
modulo \(p\), and the square-class assertion in Conjecture 4.10(i) follows
from this congruence and Sun's evaluation of \(T(d,p)\).  Thus Corollary
\ref{cor:410} should be read as an exact congruence refinement and a
uniform half-system generalization of that square-class consequence.

The first proof uses Chapman's and Vsemirnov's determinant
factorisations \cite{Chapman2004,Vsemirnov2012,Vsemirnov2013}, together
with recent variants such as \cite{WangWu2022,WangWuNi2026,LiuSunWang2024}.
The cofactor signs, Cauchy determinant specialisations and border reductions
which are easy to omit are recorded in the main text in Section
\ref{sec:expanded-adjugate}.  The second proof is a direct polynomial
expansion over \(\Fp\), followed by a Vandermonde determinant.  Its role is
to provide the exact congruence and the half-system extension, rather than
only the square class in the standard specialization.  Related
Legendre-symbol and quadratic-residue determinant problems of Sun are treated
in \cite{Wu2021,GrinbergSunZhao2022,WangSun2024,ChalihaKalita2024,RenSun2025}.

\section{The first determinant: reduction to an inverse package}\label{sec:first-reduction}

Throughout Sections \ref{sec:first-reduction}--\ref{sec:adjacent}, assume
\[
        p=2n+1\equiv1\pmod4,
        \qquad n=\frac{p-1}{2}.
\]
Then \(n\) is even and \(\chi(-t)=\chi(t)\).  Set
\[
        H=\bigl[\chi(j-k+1)\bigr]_{0\le j,k<n},
        \qquad
        \eta=(\chi(0),\chi(1),\ldots,\chi(n-1))^T,
\]
\[
        \bone=(1,\ldots,1)^T\in\Q^n,
        \qquad U=(\bone,\eta),
        \qquad
        M(x,y,z,w)=\begin{pmatrix}x&z\\y&w\end{pmatrix}.
\]
Since \(\chi(jk)=\chi(j)\chi(k)\), the matrix in Theorem \ref{thm:48} is
\[
        H+U M(x,y,z,w)U^T .
\]

\begin{proposition}\label{prop:package}
The matrix \(H\) is nonsingular and
\[
        \det H=LQ,
\]
\[
        U^TH^{-1}U=
        \begin{pmatrix}
        LP/Q&1\\[1mm]
        (Q-1)/Q&1
        \end{pmatrix}.
\]
Equivalently,
\[
\begin{aligned}
        \bone^TH^{-1}\bone&=\frac{LP}{Q},&
        \bone^TH^{-1}\eta&=1,\\
        \eta^TH^{-1}\bone&=\frac{Q-1}{Q},&
        \eta^TH^{-1}\eta&=1 .
\end{aligned}
\]
\end{proposition}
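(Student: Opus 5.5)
The plan is to embed $H$ into Chapman's $(n+1)\times(n+1)$ matrix and read off all four quantities as bordered determinants. Consider
\[
C=\bigl[\chi(j-k)\bigr]_{0\le j,k\le n}
\quad\text{and its relatives}\quad
C^{\pm}=\bigl[\chi(j-k\pm1)\bigr].
\]
Vsemirnov evaluated these: $\det[\chi(j-k)]_{0\le j,k\le n}$ for $p\equiv1\pmod4$, and the "almost Toeplitz" variants with shift $1$. The evaluations come from a factorisation $C = V D W$ into Vandermonde/Cauchy-type matrices over a cyclotomic field, with the Gauss sum and $\eps_p^{h(p)}$ appearing through the product $\prod(1-\zeta^r)^{\chi(r)}$.

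Deleting the first row and last column of $[\chi(j-k+1)]_{0\le j,k\le n}$, or equivalently taking an adjacent minor of a shifted Chapman matrix, gives exactly $H$. So $\det H$ is an adjacent cofactor of a matrix whose inverse, or adjugate, is explicitly available from Vsemirnov's factorisation. I would compute that cofactor from the factorised form. Sylvester's identity expresses a minor of $VDW$ via Cauchy–Binet as a sum over minors of the factors. For adjacent deletions only boundary terms survive, and these are Cauchy determinants specialised at roots of unity. Collecting the class-number product should give $L(B-A)=LQ$. This also proves nonsingularity, since $Q\ne0$: indeed $B>A$ follows from $\eps_p>1$ and the norm being $-1$.

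For the four bilinear forms, use the bordered-determinant identity
\[
\det\begin{pmatrix}H&u\\ v^T&0\end{pmatrix}=-\,v^T\adj(H)\,u .
\]
This gives
\[
u^T H^{-1}v=-\det\begin{pmatrix}H&v\\u^T&0\end{pmatrix}\Big/\det H .
\]
With $u,v\in\{\bone,\eta\}$, each bordered matrix is again a Legendre-symbol matrix of size $n+1$ after elementary row and column operations.

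\begin{itemize}
\item Bordering by $\eta$ on the left or top amounts to adjoining the row or column indexed $-1$ or $n$. This uses $\chi(j-k+1)$ at $k=-1$, namely $\chi(j)$ shifted, together with $\chi(-t)=\chi(t)$.
\item Bordering by $\bone$ produces a matrix with an all-ones row. Its determinant is the sum of cofactors, or equivalently the derivative in $x$ of $\det(H+x\bone\bone^T)$.
\end{itemize}

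Concretely, I expect $\eta^TH^{-1}\eta=1$ and $\bone^TH^{-1}\eta=1$ to follow from simple identities. For instance, $H$ applied to a suitable shifted indicator vector yields $\eta$ up to a boundary correction, so that $H^{-1}\eta$ is nearly a coordinate vector. I would first look for a vector $e$ with $He=\eta+(\text{multiple of a column already in the span})$, using $\sum_t\chi(t)=0$ and the antisymmetric/symmetric splitting of $\chi$ over $[0,n]$ versus $[n+1,p-1]$.

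The entries involving $\bone$ need genuine determinant evaluations: $\det(H+x\bone\bone^T)$ and $\det(H+x\bone\eta^T)$. These are the "$x$" versions of Chapman's determinants, which Vsemirnov computed, namely the $\pm$-shifted evaluations giving combinations $pB-A$. Matching $\det(H+x\bone\bone^T)=LQ+xLP$ yields the $(1,1)$ entry $LP/Q$. Matching $\det(H+x\eta\bone^T)=LQ+x(Q-1)L$ yields the $(2,1)$ entry $(Q-1)/Q$.

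The main obstacle is the careful adjacent cofactor calculation: tracking signs and the exact Cauchy determinant specialisations so that the normalisations $L$, the exponent $(2-L)h(p)$, and the boundary terms all come out right. I would cross-check the whole package numerically for $p=5,13,17$ before finalising the signs.
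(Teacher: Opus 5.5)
Your skeleton matches the paper's. \(H\) is the row-\(0\)/column-\(n\) cofactor (sign \(+1\), as \(n\) is even) of Chapman's matrix \([\chi(s-r)]_{0\le r,s\le n}\). It is not that cofactor of the shifted matrix \([\chi(j-k+1)]_{0\le j,k\le n}\), whose corresponding minor is \([\chi(j-k+2)]_{0\le j,k<n}\). The \(\bone\)-entries come from the matrix determinant lemma applied to \(E(t)=\det(H+t\bone\bone^T)\) and \(F(\rho)=\det(H+\rho\bone\eta^T)\). The \(\eta\)-entries need no search and no boundary correction: column \(k=1\) of \(H\) is exactly \(\eta\), so \(H^{-1}\eta=e_1\), and both entries equal \(1\) since \(\chi(1)=1\).

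The genuine gap is that the whole content of the proposition is the pair of evaluations \(E(t)=Pt+LQ\) and \(F(\rho)=L\{Q+(Q-1)\rho\}\). You neither prove them nor can simply cite them. Vsemirnov supplies the factorisation \(C(t)=\lambda VD\calU(t)DV\) of the full \((n+1)\times(n+1)\) matrix, and the paper has to derive these parametrised off-diagonal cofactors from it. Your proposed mechanism, that in Cauchy--Binet ``only boundary terms survive'', is not what happens. The column-\(0\) and column-\(n\) cofactor vectors of \(V\) have no zero entries, so every cofactor of the middle factor enters. What makes the computation possible is that these cofactor vectors, divided by the \(d_r\), are explicit (\(\kappa\alpha\) and \(\kappa\zeta^{-n/2}\omega\)). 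The cofactor therefore collapses to a constant times \(\omega^T\adj(\calU(t))\alpha\). The identity \(v^T\adj(A)u=\det(A+uv^T)-\det A\) then reduces this to two bordered Cauchy determinants \(I,J\), evaluated via Vsemirnov's Cauchy formula, the products \(\prod_r(1\pm x_r)^2\), and the normalising product. For \(F(\rho)\), the perturbation \(\rho\bone\eta^T\) is not a constant shift, so it must first be pushed through the factorisation as a rank-one term \(\rho R\) in the middle factor.

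Because the two evaluations are only ``matched'' rather than derived, the targets you wrote are wrong.
\begin{itemize}
\item If \(\det(H+x\bone\bone^T)=LQ+xLP\), the determinant lemma would give \(\bone^TH^{-1}\bone=P/Q\), not \(LP/Q\). The correct value is \(LQ+xP\). For \(p=5\) one has \(H=\begin{pmatrix}1&0\\-1&1\end{pmatrix}\) and \(E(t)=1+3t\), with \(L=Q=-1\) and \(P=3\).
\item \(\det(H+x\eta\bone^T)\) measures \(\bone^TH^{-1}\eta\), not \(\eta^TH^{-1}\bone\). Since \(H+x\eta\bone^T=H(I+xe_1\bone^T)\), it equals \(LQ(1+x)\), so your claimed \(LQ+xL(Q-1)\) is false. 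The right matrix is \(H+x\bone\eta^T\), as in your preceding sentence.
\item \(B>A\) is backwards. The unit \(\eps_p^{(2-L)h(p)}\) has norm \(-1\), so \(A^2=pB^2-1\), which forces \(A>B\) and \(Q<0\) (compare \(Q=-1,-13,-3\) for \(p=5,13,17\)). Nonsingularity only needs \(Q\ne0\), which still holds.
\end{itemize}
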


\begin{proof}[Proof of Theorem \ref{thm:48} from Proposition \ref{prop:package}]
Sylvester's determinant identity gives
\[
\begin{aligned}
\det(H+UMU^T)
&=\det H\det(I_n+H^{-1}UMU^T)\\
&=\det H\det(I_2+MU^TH^{-1}U).
\end{aligned}
\]
By Proposition \ref{prop:package},
\[
G:=U^TH^{-1}U=\begin{pmatrix}LP/Q&1\\ (Q-1)/Q&1\end{pmatrix}.
\]
Therefore
\[
\begin{aligned}
\det(I_2+MG)
&=1+\tr(MG)+\det(M)\det(G)\\
&=1+\frac{LP}{Q}x+\frac{Q-1}{Q}z+y+w
  +\left(xw-yz\right)\frac{LP-Q+1}{Q}.
\end{aligned}
\]
Multiplying by \(\det H=LQ\), we obtain
\[
\begin{aligned}
D_p(x,y,z,w)
&=LQ+Px+L(Q-1)z+LQy+LQw+(P-LQ+L)(xw-yz).
\end{aligned}
\]
On the other hand,
\[
\begin{aligned}
& P((w+1)x-yz)+L(wx-(y+1)z)+LQ\{w(1-x)+(y+1)(z+1)\} \\
&=LQ+Px+L(Q-1)z+LQy+LQw+(P-LQ+L)(xw-yz),
\end{aligned}
\]
which is the asserted formula.
\end{proof}

\section{Evaluation of the adjacent minors}\label{sec:adjacent}

We now prove Proposition \ref{prop:package}.  The proof is an adjugate
calculation based on Vsemirnov's factorisation of Chapman's matrix in the
case \(p\equiv1\pmod4\).

Let
\[
        \zeta=\exp(2\pi i/p),
        \qquad
        \lambda=L\sqrt p\,\zeta^{n/2}.
\]
For \(0\le r,s\le n\), set
\[
        V_{rs}=\zeta^{2rs},
\]
\[
        D=\diag(d_0,d_1,\ldots,d_n),
        \qquad
        d_r=\prod_{\substack{0\le m\le n\\m\ne r}}
             \left(\zeta^{2r}-\zeta^{2m}\right)^{-1},
\]
\[
        \calU_{rs}=
        \frac{\chi(r)\zeta^{-s-2r}+\chi(s)\zeta^{-2s-r}}
        {\zeta^{-r-s}+\chi(r)\chi(s)} .
\]
The denominator is never zero; when \(r=s=0\), it is \(1\).  Finally put
\[
        C(t)=\bigl[t+\chi(s-r)\bigr]_{0\le r,s\le n},
        \qquad
        \calU(t)=\calU+L\sqrt p\,tE_{00}.
\]

\begin{lemma}\label{lem:vsem}
For \(p\equiv1\pmod4\),
\[
        C(t)=\lambda\,V D\calU(t)D V .
\]
\end{lemma}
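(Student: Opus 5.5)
The plan is to verify the identity entrywise and to split it according to the decomposition \(\calU(t)=\calU+L\sqrt p\,tE_{00}\). Both sides are affine in \(t\), so it suffices to prove two things separately: the \(t\)-linear parts agree, and \(C(0)=\lambda VD\calU DV\). The second identity is Vsemirnov's factorisation of Chapman's matrix for \(p\equiv1\pmod4\). I would reprove it along Vsemirnov's lines rather than just quote it, because the normalisation of \(\lambda\) has to match the one fixed above.

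\emph{The \(t\)-part.} Since \(Ve_0=\bone\) and \(e_0^TV=\bone^T\), we get
\[
\lambda VD(L\sqrt p\,tE_{00})DV=\lambda L\sqrt p\,d_0^2\,t\,\bone\bone^T .
\]
So we need \(\lambda L\sqrt p\,d_0^2=1\), that is, \(d_0^{-2}=p\,\zeta^{n/2}\). Write \(d_0^{-1}=\prod_{m=1}^n(1-\zeta^{2m})\). The residues \(\pm2m\) with \(1\le m\le n\) run over all nonzero classes modulo \(p\). Hence
\[
\prod_{m=1}^n(1-\zeta^{2m})(1-\zeta^{-2m})=p .
\]
Using \(1-\zeta^{-2m}=-\zeta^{-2m}(1-\zeta^{2m})\) gives
\[
d_0^{-2}=(-1)^n\zeta^{\sum 2m}\,p=\zeta^{n(n+1)}p .
\]
Here \(n\) is even, so \((-1)^n=1\). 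Also \(n(n+1)=n(p+1)/2\equiv n/2\pmod p\). This is exactly what is required.

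\emph{The \(t=0\) part.} For fixed \(j,k\) I would compute
\[
(VD\calU DV)_{jk}=\sum_{r,s}d_r\,d_s\,\zeta^{2jr}\zeta^{2ks}\,\calU_{rs}.
\]
The key observation is that \(d_r\) are the Lagrange/residue weights for the nodes \(X_r=\zeta^{2r}\), \(0\le r\le n\). Concretely, \(\sum_r d_rf(X_r)\) is the sum of residues of \(f(X)/\prod_m(X-X_m)\) at the nodes. I would split \(r\) and \(s\) according to the values \(\varepsilon=\chi(r)\) and \(\delta=\chi(s)\), including the value \(0\) at \(r=0\). Then I would set \(x=\zeta^{-r}\) and \(y=\zeta^{-s}\). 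On each block the kernel becomes the Cauchy-type expression
\[
\frac{\varepsilon yx^2+\delta xy^2}{xy+\varepsilon\delta}.
\]
Partial fractions in one variable reduce each inner sum to sums of the form \(\sum_r d_r\zeta^{2jr}/(\zeta^{-r-s}\pm1)\). These I would evaluate by the residue theorem, using that the only poles off the nodes are at \(X\) with \(X\zeta^{-2s}=1\) and at infinity. What remains is a single Gauss-type sum. Using \(\chi(a)=p^{-1/2}\sum_b\chi(b)\zeta^{ab}\), valid since \(p\equiv1\pmod4\), this sum should collapse to \(\lambda^{-1}\chi(k-j)\).

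The main obstacle is this last step. The quadratic-character splitting has to recombine correctly with the \(\zeta^{-r}\) "square roots of the nodes". I expect this to go through only after carefully tracking which \(r\in[0,n]\) correspond to squares, and after keeping the factor \(\zeta^{n/2}\) in \(\lambda\) consistent with the \(t\)-part computed above. If the direct residue computation becomes unwieldy, my fallback is the equivalent identity \(V^{-1}C(0)V^{-1}=\lambda D\calU D\). It can be checked using the explicit inverse of the Vandermonde matrix, whose rows are \(d_r\) times elementary symmetric functions of the other nodes; this is Vsemirnov's original route.
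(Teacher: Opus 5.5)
Your reduction is the paper's own: the paper's argument consists of citing Vsemirnov for \(t=0\) plus the remark that \(t\) only enters the \((0,0)\) entry of the middle factor. Your \(t\)-computation (\(\lambda L\sqrt p\,d_0^2=1\) via \(d_0^{-2}=\zeta^{n(n+1)}p=\zeta^{n/2}p\)) is correct, and it supplies the constant check that the remark leaves implicit. If you simply quote Vsemirnov for \(t=0\), your proof is complete in the same sense as the paper's.

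The gap is in the self-contained reproof you promise. As sketched it does not close, and the step you flag as the obstacle (the Gauss sum) is not where the difficulty lies. Your \(\chi\)-block partial fractions recombine into a single decomposition valid for all \(r,s\). In it, the \(r=0\) (resp. \(s=0\)) term is omitted, since \(\phi(0)=0\):
\[
\calU_{rs}=\frac{\phi(r)}{\zeta^{-2r}-\zeta^{2s}}+\frac{\phi(s)}{\zeta^{-2s}-\zeta^{2r}},\qquad \phi(r)=\chi(r)\bigl(\zeta^{-3r}-\zeta^{-r}\bigr).
\]
The inner sum therefore runs over all nodes. So the interpolation identity \(\sum_{s=0}^n d_s\zeta^{2ks}/(Y-\zeta^{2s})=Y^k/\Phi(Y)\) applies, for \(0\le k\le n\) and \(\Phi(T)=\prod_{m=0}^n(T-\zeta^{2m})\). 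The off-node pole is at \(X=\zeta^{-2s}\), i.e. \(X\zeta^{2s}=1\); the point \(X\zeta^{-2s}=1\) you wrote is a node. This gives
\[
(VD\calU DV)_{jk}=\sum_{r=1}^n\frac{d_r\,\phi(r)}{\Phi(\zeta^{-2r})}\bigl(\zeta^{2r(j-k)}+\zeta^{-2r(j-k)}\bigr).
\]
Nothing becomes a Gauss sum until you know \(d_r\phi(r)/\Phi(\zeta^{-2r})=\chi(r)/(p\zeta^{n/2})\), that is,
\[
\Phi(\zeta^{-2r})\,\Phi'(\zeta^{2r})=p\,\zeta^{n/2}\bigl(\zeta^{-3r}-\zeta^{-r}\bigr)\qquad(1\le r\le n).
\]
This product evaluation is the real content of the factorisation. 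It is what forces the particular numerator of \(\calU\) and the factor \(\zeta^{n/2}\) in \(\lambda\), and your plan never addresses it.

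Here is how to prove it. Write \(T^p-1=\Phi(T)\Psi(T)\), where \(\Psi(T)=\prod_{m=1}^n(T-\zeta^{-2m})\). Put \(\Phi_1(T)=\Phi(T)/(T-1)\); then \(\Psi(T)=\zeta^{-n/2}T^n\Phi_1(1/T)\).
\begin{itemize}
\item Differentiating \(T^p-1=\Phi\Psi\) at \(T=\zeta^{-2r}\) gives \(\Phi(\zeta^{-2r})=p\zeta^{2r}/\Psi'(\zeta^{-2r})\).
\item Differentiating the formula for \(\Psi\) gives \(\Psi'(\zeta^{-2r})=-\zeta^{-n/2}\zeta^{-2r(n-2)}\Phi_1'(\zeta^{2r})\).
\item Combine these with \(\Phi'(\zeta^{2r})=(\zeta^{2r}-1)\Phi_1'(\zeta^{2r})\) and \(\zeta^{2rn}=\zeta^{-r}\).
\end{itemize}
After that the last step is immediate:
\[
\sum_{r=1}^n\chi(r)\bigl(\zeta^{2rm}+\zeta^{-2rm}\bigr)=L\sum_{b\ne0}\chi(b)\zeta^{bm}=L\sqrt p\,\chi(m),
\]
which yields \(\lambda^{-1}\chi(k-j)\). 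With this step added, your route becomes a genuinely self-contained proof that also confirms the normalisation of \(\lambda\). The paper instead takes that normalisation on trust from Vsemirnov.
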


\begin{remark}
This is the form of Vsemirnov's decomposition needed here.  It is a
rewriting of the factorisation used in \cite{Vsemirnov2013}; the added
parameter \(t\) only changes the \((0,0)\)-entry in the middle factor.
\end{remark}

For \(1\le r\le n\), define
\[
        x_r=\chi(r)\zeta^{-r},
\]
\[
        \alpha=(1,x_1^2,\ldots,x_n^2)^T,
        \qquad
        \omega=(1,\ldots,1)^T\in\C^{n+1}.
\]
Let \(\mathfrak V_{rc}\) be the cofactor of the entry \(V_{rc}\).  Put
\[
        \kappa=\zeta^{n(n+1)}
        \prod_{0\le r<s\le n}(\zeta^{2s}-\zeta^{2r}).
\]
A Vandermonde interpolation calculation gives
\begin{equation}\label{eq:cofactor-first}
        \left(\frac{\mathfrak V_{00}}{d_0},
        \frac{\mathfrak V_{10}}{d_1},\ldots,
        \frac{\mathfrak V_{n0}}{d_n}\right)=\kappa\alpha^T,
\end{equation}
\begin{equation}\label{eq:cofactor-last}
        \left(\frac{\mathfrak V_{0n}}{d_0},
        \frac{\mathfrak V_{1n}}{d_1},\ldots,
        \frac{\mathfrak V_{nn}}{d_n}\right)=\kappa\zeta^{-n/2}\omega^T.
\end{equation}
Indeed,
\[
        \frac{\mathfrak V_{rc}}{\det V}
        =[T^r]\prod_{\substack{0\le m\le n\\m\ne c}}
        \frac{T-\zeta^{2m}}{\zeta^{2c}-\zeta^{2m}},
\]
and division by \(d_r\) reduces the cases \(c=0\) and \(c=n\) to
\[
        \frac{\mathfrak V_{r0}}{d_r\kappa}=x_r^2\quad(1\le r\le n),
        \qquad
        \frac{\mathfrak V_{00}}{d_0\kappa}=1,
\]
\[
        \frac{\mathfrak V_{rn}}{d_r\kappa}=\zeta^{-n/2}
        \quad(0\le r\le n).
\]

For \(t,\rho\) introduce
\[
        E(t)=e_n^T\adj(C(t))e_0,
\]
\[
        F(\rho)=e_n^T\adj(C_\rho)e_0,
        \qquad
        C_\rho=\bigl[\chi(s-r)+\rho\chi(s)\bigr]_{0\le r,s\le n}.
\]
Deleting row \(0\) and column \(n\) gives
\begin{equation}\label{eq:Eminor}
        E(t)=\det\bigl[t+\chi(j-k+1)\bigr]_{0\le j,k<n},
\end{equation}
\begin{equation}\label{eq:Fminor}
        F(\rho)=\det\bigl[\chi(j-k+1)+\rho\chi(k)\bigr]_{0\le j,k<n}.
\end{equation}
The sign is \(+1\), since \(n\) is even.

\begin{lemma}\label{lem:adjacent}
For \(p\equiv1\pmod4\),
\[
        E(t)=Pt+LQ,
        \qquad
        F(\rho)=L\{Q+(Q-1)\rho\}.
\]
\end{lemma}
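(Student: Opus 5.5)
The plan is to read both quantities off the factorisation in Lemma \ref{lem:vsem}, using the adjugate rule \(\adj(XY)=\adj(Y)\adj(X)\) together with the cofactor formulas \eqref{eq:cofactor-first}--\eqref{eq:cofactor-last}. Since all matrices have size \(n+1\), we have \(\adj(\lambda X)=\lambda^{n}\adj(X)\) and \(\adj(D)=(\det D)D^{-1}\), so
\[
E(t)=\lambda^{n}(\det D)^2\,\bigl(e_n^T\adj(V)D^{-1}\bigr)\,\adj(\calU(t))\,\bigl(D^{-1}\adj(V)e_0\bigr).
\]
Because \(V\) is symmetric, the row vector \(e_n^T\adj(V)\) has entries \(\mathfrak V_{rn}\), and the column \(\adj(V)e_0\) has entries \(\mathfrak V_{0r}=\mathfrak V_{r0}\). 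By \eqref{eq:cofactor-first}--\eqref{eq:cofactor-last}, this gives
\[
E(t)=\lambda^{n}(\det D)^2\kappa^2\zeta^{-n/2}\,\omega^T\adj(\calU(t))\,\alpha .
\]
Only the \((0,0)\) entry of \(\calU(t)\) depends on \(t\), and every cofactor is affine in that entry. Hence \(E(t)\) is affine in \(t\). Its constant term is \(E(0)\), which by \eqref{eq:Eminor} equals \(\det H\). Its slope is \(L\sqrt p\) times a second-order cofactor, namely the bilinear form \(\omega^T\adj(\calU)\alpha\) taken with row and column \(0\) deleted.

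Next I would exploit the Cauchy-like structure of \(\calU\). In the variables \(x_r=\chi(r)\zeta^{-r}\), dividing numerator and denominator of \(\calU_{rs}\) by suitable monomials should express \(\calU\) as a diagonal scaling of a matrix of the form \([(\,\cdot\,)/(1+x_rx_s)]\), possibly plus a rank-one correction. Bordered Cauchy determinants then evaluate both bilinear forms \(\omega^T\adj(\cdot)\alpha\) (with and without the \(0\)th row and column) as explicit products over \(1\le r<s\le n\) of factors such as \((x_r-x_s)\), \((1+x_rx_s)\) and \((x_r+x_s)\). The prefactor \(\lambda^n(\det D)^2\kappa^2\) is a Vandermonde product in powers of \(\zeta\), and it should cancel most of these. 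I expect what remains to be products of the shape \(\prod(1\pm\zeta^{a})\) restricted to residues or nonresidues.

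The main obstacle is converting these cyclotomic products into \(P=pB-A\) and \(Q=B-A\). Here I would follow Vsemirnov: the products over quadratic residues versus nonresidues of \(1-\zeta^{a}\) are expressed through \(\eps_p^{\pm h(p)}\) by Dirichlet's class number formula, and \(\sqrt p\) comes from the Gauss sum. The factor \((2-L)\) in the exponent arises from the contribution of \(1+\zeta^{a}=(1-\zeta^{2a})/(1-\zeta^{a})\), since \(a\mapsto 2a\) preserves or swaps the residue classes according to \(L\). As a normalisation check, I would compare against the known value of \(\det C(0)\) (Chapman--Vsemirnov) and against small primes \(p=5,13\).

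For \(F(\rho)\), note that \(C_\rho=C(0)+\rho\,\bone\,\chi^T\), where \(\chi^T=(\chi(s))_{0\le s\le n}\), which is a rank-one update. Therefore \(F(\rho)\) is affine in \(\rho\), and \(F(0)=E(0)=LQ\) by \eqref{eq:Eminor}--\eqref{eq:Fminor}. For the slope, I would use the same factorisation with \(\bone\) and \(\chi^T\) transported through \(V\). The row \(\chi^TV^{-1}\) is computed by Gauss sums, because \(\sum_s\chi(s)\zeta^{2rs}=\chi(2r)\sqrt p\,\) up to the fold-to-half-range correction. The slope then reduces to a bordered Cauchy determinant of the same kind, which I would evaluate and identify as \(L(Q-1)\) in the same way as above.
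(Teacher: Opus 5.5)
Your opening reduction is exactly the paper's \eqref{eq:E-adj}. Your remarks that \(E\) and \(F\) are affine, with the slope of \(E\) coming from the second-order cofactor at the \((0,0)\) entry, are also correct. From there the paper follows the Cauchy/cyclotomic route you outline.

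The gap is that everything which actually produces \(P\) and \(Q\) is left as ``should'' and ``I expect'', and the one concrete prediction you make cannot be right. You expect the bordered determinants to come out as \emph{products} of factors \(x_r\pm x_s\), \(1+x_rx_s\), \(1\pm\zeta^a\), to be ``converted'' into \(P,Q\) via \(\eps_p^{\pm h(p)}\) and \(\sqrt p\). But each such factor is a root of unity times a unit of \(\Z[\zeta]\) times powers of \(1-\zeta\) and \(2\). The same holds for the prefactor \(\lambda^n(\det D)^2\kappa^2\zeta^{-n/2}\). So every rational number obtainable this way has the form \(\pm2^ap^b\). Yet for \(p=17\) the lemma asserts \(E(0)=\det H=-3\) and \(P=13\).

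The missing idea is additive. After the rank-one identity \(v^T\adj(A)u=\det(A+uv^T)-\det A\) and conjugation by \(G=\diag(1,x_1^{-1},\ldots,x_n^{-1})\), you need the evaluation \eqref{eq:cauchy} of \(\det[\frac{X_r+Y_s}{1+X_rY_s}]\). Its prefactor \(\frac12\{\prod(1+X_r)(1+Y_r)+(-1)^N\prod(1-X_r)(1-Y_r)\}\) is a \emph{sum} of two products. Combined with \eqref{eq:prod-plus}--\eqref{eq:prod-minus}, which make \(\prod(1\pm x_r)^2\) a fixed root of unity times \(Bp\pm A\sqrt p\), this is what separates \(A\) from \(B\). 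A linear combination of two such determinants then yields \(P\) and \(Q\). The class number formula enters only through these product identities and \eqref{eq:normalising}.

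Do not simply transcribe the paper's combination, though. \eqref{eq:diff-IJ} with \eqref{eq:IJ-eval} gives \(-I-\{L\sqrt p\,t-1\}J=(Bp-LApt)\Xi\), whose slope-to-constant ratio is \(-LA/B\), equal to \(2\) at \(p=5\). Directly, \(E(t)=1+3t\) at \(p=5\), so that finite reduction must be redone, and \(p=5\) is a cheap check.

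For \(F\), the Gauss-sum step is wrong as stated. \(V=[\zeta^{2rs}]_{0\le r,s\le n}\) is a Vandermonde matrix on half of the \(p\)-th roots of unity, not a DFT matrix. So \(\sum_s\chi(s)\zeta^{2rs}\) is \((\theta^TV)_r\), an incomplete Gauss sum. By contrast, \(\theta^TV^{-1}\) is the coefficient vector of the degree-\(\le n\) interpolant of \(\zeta^{2s}\mapsto\chi(s)\), which is not a Gauss sum.

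Two observations repair this. First, \(Ve_0=\omega_0\), so \(V^{-1}\omega_0=e_0\) and the perturbation \(R\) only changes row \(0\) of \(\calU\). Better, you can avoid \(\theta^TV^{-1}\) altogether. \(\eta^T\) is row \(0\) of \(C(0)\) with its last entry removed, and rows \(1,\ldots,n\) of \(C(0)\) without the last column form \(H\). A Schur complement and multilinearity in the first row then give \(F(\rho)=(1+\rho)\det H-\rho\,e_n^T\adj(C(0))\omega_0\). Since \(\adj(V)\omega_0=\det V\,e_0\), the last term becomes a multiple of \(\omega^T\adj(\calU)e_0\). Thus, given \(\det H=LQ\), the \(F\)-formula is equivalent to \(e_n^T\adj(C(0))\omega_0=L\) (at \(p=5\) this determinant is \(-1\)). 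That is a single bordered evaluation of the same type as for \(E\).
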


\begin{proof}
By Lemma \ref{lem:vsem} and \eqref{eq:cofactor-first}--\eqref{eq:cofactor-last},
\begin{equation}\label{eq:E-adj}
        E(t)=
        \lambda^n(\det D)^2\kappa^2\zeta^{-n/2}
        \,\omega^T\adj(\calU(t))\alpha .
\end{equation}
Using the rank-one adjugate identity
\[
        v^T\adj(A)u=\det(A+uv^T)-\det A,
\]
we get
\[
        \omega^T\adj(\calU(t))\alpha
        =\det(\calU(t)+\alpha\omega^T)-\det\calU(t).
\]
Let
\[
        G=\diag(1,x_1^{-1},\ldots,x_n^{-1}).
\]
Direct substitution into the entries of \(\calU\) gives a Cauchy normal
form: the nontrivial lower-right block of \(G\calU(t)G\) is
\[
        \left[\frac{x_r+x_s}{1+x_rx_s}\right]_{1\le r,s\le n}.
\]
The rank-one term is tracked at determinant level, rather than entry by
entry: \(G\alpha=(1,x_1,\ldots,x_n)^T\) and
\(\omega^TG=(1,x_1^{-1},\ldots,x_n^{-1})\), so the added matrix has
border terms as well as lower-right terms.  The required bordered Cauchy
reduction is therefore recorded as a determinant identity.  The finite
reduction needed at this point is
\begin{equation}\label{eq:diff-IJ}
\begin{aligned}
&\det(G(\calU(t)+\alpha\omega^T)G)-\det(G\calU(t)G)\\
&\hspace{12mm}=-I-\{L\sqrt p\,t-1\}J,
\end{aligned}
\end{equation}
where
\[
        I=\det\left[\frac{X_r+X_s}{1+X_rX_s}\right]_{-1\le r,s\le n},
        \qquad
        J=\det\left[\frac{X_r+X_s}{1+X_rX_s}\right]_{0\le r,s\le n},
\]
\[
        X_{-1}=1,
        \qquad X_0=0,
        \qquad X_r=x_r\quad(1\le r\le n).
\]
For completeness, the block reduction which gives \eqref{eq:diff-IJ}
is written out in Section \ref{sec:expanded-adjugate}; the Cauchy evaluation is
used only after this finite reduction has been made.
The Cauchy determinant evaluation
\begin{equation}\label{eq:cauchy}
\begin{aligned}
&\det\left[\frac{X_r+Y_s}{1+X_rY_s}\right]_{1\le r,s\le N} \\
&=\frac{1}{2}
\left\{\prod_{r=1}^N(1+X_r)(1+Y_r)
+(-1)^N\prod_{r=1}^N(1-X_r)(1-Y_r)\right\}                    \\
&\qquad\times
\frac{\prod_{1\le r<s\le N}(X_r-X_s)(Y_s-Y_r)}
     {\prod_{r,s=1}^N(1+X_rY_s)}
\end{aligned}
\end{equation}
This is the Cauchy determinant in the form used by Vsemirnov; it is also a
standard entry in advanced determinant calculus
\cite{Vsemirnov2013,Krattenthaler1999,Krattenthaler2005}.  The diagonal
specialisation and the signs used below are expanded in Section
\ref{sec:expanded-adjugate}.  The formula therefore yields
\begin{equation}\label{eq:IJ-eval}
        I=(A\sqrt p-Bp)\,\Xi,
        \qquad
        J=A\sqrt p\,\Xi,
\end{equation}
where
\[
        \Xi=f_1^2f_2^{-2}\zeta^{-n(n+1)}L,
\]
\[
        f_1=\prod_{1\le r<s\le n}(\chi(s)\zeta^s-\chi(r)\zeta^r),
        \qquad
        f_2=\prod_{1\le r<s\le n}(1+\chi(r)\chi(s)\zeta^{r+s}).
\]
Here one uses the product evaluations
\begin{equation}\label{eq:prod-plus}
\prod_{r=1}^n(1+x_r)^2
=(-1)^{n/2}\zeta^{-n(n+1)/2}(Bp+A\sqrt p),
\end{equation}
\begin{equation}\label{eq:prod-minus}
\prod_{r=1}^n(1-x_r)^2
=(-1)^{n/2}\zeta^{-n(n+1)/2}(Bp-A\sqrt p),
\end{equation}
and Vsemirnov's normalising product
\begin{equation}\label{eq:normalising}
        \lambda^n(\det D)^2\kappa^2\zeta^{-n/2}\Xi=p^{-1}.
\end{equation}
Substituting \eqref{eq:diff-IJ}--\eqref{eq:normalising} into
\eqref{eq:E-adj} and simplifying gives
\[
        E(t)=Pt+LQ .
\]

For \(F(\rho)\), write
\[
        C_\rho=C(0)+\rho\,\omega_0\theta^T,
        \qquad
        \omega_0=(1,\ldots,1)^T,
        \quad
        \theta=(\chi(0),\chi(1),\ldots,\chi(n))^T.
\]
In the same \(V,D,\calU\) coordinates,
\[
        C_\rho=\lambda VD(\calU+\rho R)DV,
\]
where \(R\) is the rank-one matrix
\[
        R=\lambda^{-1}D^{-1}V^{-1}\omega_0\theta^T V^{-1}D^{-1};
\]
equivalently,
\[
        \lambda VDRDV=\omega_0\theta^T.
\]
Thus
\[
        F(\rho)=
        \lambda^n(\det D)^2\kappa^2\zeta^{-n/2}
        \,\omega^T\adj(\calU+\rho R)\alpha .
\]
The corresponding rank-one reduction, again recorded in Section
\ref{sec:expanded-adjugate}, is
\[
\begin{aligned}
&\det(G(\calU+\rho R+
\alpha\omega^T)G)
 -\det(G(\calU+\rho R)G)                                      \\
&=-I-\{L\sqrt p\,\rho-1\}J
  -\rho\,\Xi p^{-1/2}\{Bp-A\sqrt p-\sqrt p\}.
\end{aligned}
\]
Together with \eqref{eq:IJ-eval}--\eqref{eq:normalising}, this simplifies to
\[
        F(\rho)=LQ+L(Q-1)\rho .
\]
\end{proof}

We can now finish the proof of Proposition \ref{prop:package}.  From
\eqref{eq:Eminor} and Lemma \ref{lem:adjacent},
\[
        \det H=E(0)=LQ,
\]
so \(H\) is nonsingular.  Also
\[
        \det(H+t\bone\bone^T)=E(t)=LQ+Pt.
\]
By the matrix determinant lemma,
\[
        \bone^TH^{-1}\bone
        =\frac{E'(0)}{E(0)}
        =\frac{P}{LQ}
        =\frac{LP}{Q}.
\]
The column \(k=1\) of \(H\) is \(\eta\), since
\[
        He_1=(\chi(j))_{0\le j<n}=\eta .
\]
Consequently
\[
        H^{-1}\eta=e_1,
        \qquad
        \bone^TH^{-1}\eta=1,
        \qquad
        \eta^TH^{-1}\eta=1.
\]
Finally, from \eqref{eq:Fminor} and Lemma \ref{lem:adjacent},
\[
        \det(H+\rho\bone\eta^T)=LQ+L(Q-1)\rho,
\]
which gives
\[
        \eta^TH^{-1}\bone
        =\frac{L(Q-1)}{LQ}
        =\frac{Q-1}{Q}.
\]
This proves Proposition \ref{prop:package}.

\section{Expanded adjugate identities for the adjacent-minor calculation}\label{sec:expanded-adjugate}

This section records the finite determinant identities used in Lemma
\ref{lem:adjacent}.  They make the adjacent cofactor calculation
reproducible.

Let
\[
        \Phi(T)=\prod_{m=0}^n(T-\zeta^{2m}),
        \qquad
        \Phi_c(T)=\frac{\Phi(T)}{T-\zeta^{2c}}.
\]
Then
\[
        \mathfrak V_{rc}
        =\det(V)\,[T^r]\frac{\Phi_c(T)}{\Phi_c(\zeta^{2c})},
        \qquad
        d_r^{-1}=\Phi_r(\zeta^{2r}).
\]
For \(c=0\),
\[
\begin{aligned}
\frac{\mathfrak V_{r0}}{d_r}
&=\det(V)\,[T^r]\frac{\Phi_0(T)}{\Phi_0(1)}\Phi_r(\zeta^{2r})  \\
&=\kappa\chi(r)^2\zeta^{-2r}\qquad(1\le r\le n),
\end{aligned}
\]
while \(\mathfrak V_{00}/d_0=\kappa\).  For \(c=n\),
\[
\begin{aligned}
\frac{\mathfrak V_{rn}}{d_r}
&=\det(V)\,[T^r]\frac{\Phi_n(T)}{\Phi_n(\zeta^{2n})}
             \Phi_r(\zeta^{2r})                                \\
&=\kappa\zeta^{-n/2}\qquad(0\le r\le n).
\end{aligned}
\]

For the Cauchy determinant used above, put
\[
        \mathcal C(X_1,\ldots,X_N;Y_1,\ldots,Y_N)
        =
        \det\left[\frac{X_r+Y_s}{1+X_rY_s}\right]_{1\le r,s\le N}.
\]
Then
\[
\begin{aligned}
\mathcal C(X;Y)
&=\frac12\left\{\prod_{r=1}^N(1+X_r)(1+Y_r)
+(-1)^N\prod_{r=1}^N(1-X_r)(1-Y_r)\right\}                    \\
&\qquad\times
\frac{\prod_{1\le r<s\le N}(X_r-X_s)(Y_s-Y_r)}
     {\prod_{r,s=1}^N(1+X_rY_s)}.
\end{aligned}
\]
With
\[
        X_{-1}=1,
        \qquad X_0=0,
        \qquad X_r=x_r\quad(1\le r\le n),
\]
the two determinants appearing in Lemma \ref{lem:adjacent} are
\[
        I=\mathcal C(X_{-1},X_0,X_1,\ldots,X_n;
                    X_{-1},X_0,X_1,\ldots,X_n),
\]
\[
        J=\mathcal C(X_0,X_1,\ldots,X_n;
                    X_0,X_1,\ldots,X_n).
\]

The two determinant reductions used in Lemma \ref{lem:adjacent} are the
following finite identities.  First,
\[
\begin{aligned}
&\det(G(\calU(t)+\alpha\omega^T)G)-\det(G\calU(t)G)\\
&\hspace{12mm}=-I-\{L\sqrt p\,t-1\}J.
\end{aligned}
\]
Second, if
\[
        R=\lambda^{-1}D^{-1}V^{-1}\omega_0\theta^T V^{-1}D^{-1},
        \qquad
        \theta=(\chi(0),\chi(1),\ldots,\chi(n))^T,
\]
then
\[
\begin{aligned}
&\det(G(\calU+\rho R+\alpha\omega^T)G)
 -\det(G(\calU+\rho R)G)                                      \\
&=-I-\{L\sqrt p\,\rho-1\}J
  -\rho\,\Xi p^{-1/2}\{Bp-A\sqrt p-\sqrt p\}.
\end{aligned}
\]
These identities are obtained by substituting the entries of \(\calU\),
conjugating by \(G\), and eliminating the first two border rows and
columns.  The remaining determinants are exactly the two diagonal
Cauchy determinants \(I\) and \(J\).  No limiting argument is involved,
because \(1+X_rX_s\ne0\) for the present values of \(X_r\).

After cancellation of the zero-row and zero-column factors, one obtains
\[
\begin{aligned}
        I&=(-1)^{(p+3)/4}
        \prod_{r=1}^n(1-x_r)^2
        \prod_{1\le r<s\le n}(x_r-x_s)^2
        \prod_{r,s=1}^n(1+x_rx_s)^{-1}
        \prod_{r=1}^n x_r^2,
\end{aligned}
\]
\[
\begin{aligned}
        J&=(-1)^{(p-1)/4}\frac12
        \left\{\prod_{r=1}^n(1+x_r)^2-
                \prod_{r=1}^n(1-x_r)^2\right\}                 \\
        &\qquad\times
        \prod_{1\le r<s\le n}(x_r-x_s)^2
        \prod_{r,s=1}^n(1+x_rx_s)^{-1}
        \prod_{r=1}^n x_r^2.
\end{aligned}
\]
The squared Vandermonde factor is essential: in the diagonal
specialisation \(Y=X\) of the Cauchy product
\[
        \prod_{r<s}(X_r-X_s)(Y_s-Y_r)
\]
both factors contribute, and after the signs are absorbed into the
displayed powers of \((-1)\) the remaining factor is
\(\prod_{r<s}(x_r-x_s)^2\).
The product identities \eqref{eq:prod-plus} and \eqref{eq:prod-minus}
therefore imply
\[
        I=(A\sqrt p-Bp)\Xi,
        \qquad
        J=A\sqrt p\,\Xi.
\]
The remaining normalisation is
\[
        \lambda^n(\det D)^2\kappa^2\zeta^{-n/2}
        f_1^2f_2^{-2}\zeta^{-n(n+1)}L=p^{-1}.
\]
Substitution into the adjugate formula gives
\[
        E(t)=Pt+LQ,
        \qquad
        F(\rho)=LQ+L(Q-1)\rho.
\]
Thus the only external input in the first evaluation is Vsemirnov's
factorisation and the normalising product \eqref{eq:normalising}; the
remaining steps are finite determinant reductions and the displayed
Cauchy evaluation.

\section{The second determinant: a two-antidiagonal factorisation}\label{sec:second}

We turn to Theorem \ref{thm:410-uniform}.  The argument is a determinant
calculus lemma followed by the congruence \(\chi(z)\equiv z^{(p-1)/2}\pmod p\).

\begin{lemma}[Two-antidiagonal determinant]\label{lem:twoanti}
Let \(K\) be a field and let \(n=2m\).  Let
\[
        B=(B_{r,s})_{0\le r,s\le n},
        \qquad
        B_{r,s}=\alpha_s\mathbf 1_{r+s=m}+\beta_s\mathbf 1_{r+s=n}.
\]
Then
\[
        \det B=(-1)^{n(n+1)/2}\prod_{s=0}^{n}\beta_s .
\]
\end{lemma}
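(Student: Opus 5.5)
The plan is to permute the rows so that $B$ becomes triangular with the $\beta_s$ on the diagonal. Throughout I assume $m\ge1$. In the degenerate case $m=0$ the two antidiagonals coincide, the single entry is $\alpha_0+\beta_0$, and the formula as stated would need $\alpha_0=0$.

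First describe the support of column $s$. The entry $\beta_s$ sits in row $r=n-s$, which always lies in $\{0,\dots,n\}$. The entry $\alpha_s$ sits in row $r=m-s$, which exists only when $s\le m$. Since $m\ne n$, these two rows are distinct, so no entry is a sum $\alpha_s+\beta_s$.

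Next reverse the order of the rows with the permutation $r\mapsto r'=n-r$, and call the resulting matrix $B'$. Reversing $n+1$ rows is a product of $\lfloor (n+1)/2\rfloor$ transpositions. Its sign is $(-1)^{n(n+1)/2}$, because the reversal of $0,\dots,n$ has exactly $\binom{n+1}{2}$ inversions. Hence $\det B=(-1)^{n(n+1)/2}\det B'$.

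In $B'$, the $\beta_s$ entry of column $s$ lands in row $r'=n-(n-s)=s$, which is the diagonal. The $\alpha_s$ entry, present only when $s\le m$, lands in row $r'=n-(m-s)=m+s$. Since $m\ge1$, we have $m+s>s$, so this entry lies strictly below the diagonal. Every column of $B'$ therefore has its nonzero entries on or below the diagonal, so $B'$ is lower triangular with diagonal $(\beta_0,\dots,\beta_n)$. This gives $\det B'=\prod_{s}\beta_s$, and the formula follows.

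I do not expect a real obstacle. The only points needing care are the sign of the reversal permutation and checking that the $\alpha$-antidiagonal falls strictly on one side of the diagonal after reversal, which is exactly where $m\ge1$ is used.
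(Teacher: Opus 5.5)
Your proof is correct, and it differs from the paper's in execution rather than in substance.

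The paper works directly in the Leibniz expansion. It first forces $\sigma(r)=n-r$ for the rows $m<r\le n$, which cannot meet the shorter antidiagonal. It then rules out the shorter antidiagonal in rows $0\le r\le m$ by a column-occupancy argument, with the row $r=0$ treated separately through row $m$. You instead reverse the rows once. The $\beta$-antidiagonal becomes the main diagonal, and the $\alpha$-antidiagonal lands in positions $(m+s,s)$, strictly below it.

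Your version treats all rows uniformly and gets the sign $(-1)^{\binom{n+1}{2}}$ as the sign of one explicit permutation. It also shows at a glance why the $\alpha_s$ are irrelevant. The paper's argument establishes the same fact (only the reversal permutation contributes), but by tracking columns by hand.

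Your remark on $m=0$ is well taken. There the two antidiagonals coincide, and the stated formula fails unless $\alpha_0=0$. The paper's occupancy argument also tacitly needs $m\ge1$, since it relies on row $0$ and row $m$ being distinct. This is harmless in the application, because $p=4m+1$ forces $m\ge1$.
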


\begin{proof}
In the Leibniz expansion, if \(r>m\), then \(r+s=m\) is impossible.
Thus every nonzero permutation term must satisfy
\[
        \sigma(r)=n-r\qquad(m<r\le n).
\]
The columns \(0,1,\ldots,m-1\) are already occupied by the rows
\(n,n-1,\ldots,m+1\).  If a row \(0\le r\le m\) used the shorter
antidiagonal, then \(\sigma(r)=m-r\in\{0,\ldots,m\}\).  For
\(r\ge1\) this column is already occupied, and for \(r=0\) the remaining
middle column is forced by the longer antidiagonal in the row \(m\).  Hence
the shorter antidiagonal cannot occur in a nonzero permutation term.  The
only admissible permutation is
\[
        \sigma_0(r)=n-r\qquad(0\le r\le n).
\]
Therefore
\[
        \det B=\operatorname{sgn}(\sigma_0)\prod_{r=0}^{n}B_{r,n-r}
        =(-1)^{\binom{n+1}{2}}\prod_{s=0}^n\beta_s .
\]
\end{proof}

\begin{proposition}\label{prop:kernel}
Let \(K\) be a field, \(n=2m\), and let
\(x_0,\ldots,x_n\in K\) be pairwise distinct.  Put
\[
        V=(x_i^r)_{0\le i,r\le n}.
\]
For
\[
        K_0(X,Y)=\sum_{s=0}^{m}\alpha_sX^{m-s}Y^s+
               \sum_{s=0}^{n}\beta_sX^{n-s}Y^s,
\]
one has
\[
\det[K_0(x_i,x_j)]_{0\le i,j\le n}
=
(-1)^{n(n+1)/2}
\left(\prod_{s=0}^{n}\beta_s\right)
\prod_{0\le i<j\le n}(x_j-x_i)^2 .
\]
\end{proposition}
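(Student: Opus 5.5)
The plan is to factor the matrix \([K_0(x_i,x_j)]\) through the Vandermonde matrix \(V\) and then apply Lemma \ref{lem:twoanti}.

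The kernel \(K_0(X,Y)\) is a polynomial of degree at most \(n\) in each variable. We therefore expand it in the monomial basis \(X^rY^s\) with \(0\le r,s\le n\):
\[
K_0(X,Y)=\sum_{r,s=0}^{n}B_{r,s}X^rY^s .
\]
The term \(\alpha_sX^{m-s}Y^s\) contributes to the coefficient with \(r=m-s\), that is, \(r+s=m\). The term \(\beta_sX^{n-s}Y^s\) contributes to the coefficient with \(r+s=n\). Hence
\[
B_{r,s}=\alpha_s\mathbf 1_{r+s=m}+\beta_s\mathbf 1_{r+s=n},
\]
which is exactly the two-antidiagonal matrix of Lemma \ref{lem:twoanti}. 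Here we use the convention \(\alpha_s=0\) for \(s>m\); this is harmless, because for \(s>m\) the condition \(r+s=m\) has no solution with \(r\ge0\).

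Next we write the evaluation matrix as a product:
\[
[K_0(x_i,x_j)]_{i,j}=\Bigl[\sum_{r,s}x_i^rB_{r,s}x_j^s\Bigr]_{i,j}=VBV^T,
\]
where \(V=(x_i^r)\) as in the statement. Taking determinants gives
\[
\det[K_0(x_i,x_j)]=(\det V)^2\det B .
\]
The Vandermonde evaluation gives \(\det V=\prod_{0\le i<j\le n}(x_j-x_i)\), so \((\det V)^2=\prod_{i<j}(x_j-x_i)^2\). Lemma \ref{lem:twoanti} gives \(\det B=(-1)^{n(n+1)/2}\prod_{s=0}^{n}\beta_s\). Multiplying these yields the claimed formula.

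There is essentially no obstacle. The only points to check are the orientation of the product (\(VBV^T\) rather than \(V^TBV\), which is immaterial for the determinant) and that the index conventions for \(B\) agree with those in Lemma \ref{lem:twoanti}. The distinctness of the \(x_i\) is not actually needed for the identity, which holds as a polynomial identity; it only ensures that the right-hand side is nonzero when all \(\beta_s\neq0\).
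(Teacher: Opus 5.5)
Your proposal is correct and is essentially the paper's proof: the paper also writes \([K_0(x_i,x_j)]=VBV^T\) with \(B\) the two-antidiagonal matrix of Lemma \ref{lem:twoanti}, then combines that lemma with the Vandermonde formula. Your remark that distinctness of the \(x_i\) is not needed is also right, since both sides are polynomials that vanish when two \(x_i\) coincide.
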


\begin{proof}
With \(B\) as in Lemma \ref{lem:twoanti}, the matrix
\([K_0(x_i,x_j)]\) factors as
\[
        [K_0(x_i,x_j)]_{0\le i,j\le n}=VBV^T.
\]
The result follows from Lemma \ref{lem:twoanti} and the Vandermonde
formula
\[
        \det V=\prod_{0\le i<j\le n}(x_j-x_i).
\]
\end{proof}

\begin{proof}[Proof of Theorem \ref{thm:410-uniform}]
Let \(x_i=a_i^2\).  By hypothesis, the \(x_i\)'s are pairwise distinct.
Since \(p=4m+1\) and \(n=(p-1)/2=2m\), Euler's criterion gives
\[
        \chi(z)\equiv z^n\pmod p\qquad(z\in\Fp).
\]
Therefore
\[
\begin{aligned}
\chi(a_i+a_j)+\chi(a_i-a_j)
&\equiv (a_i+a_j)^n+(a_i-a_j)^n                         \\
&=2\sum_{s=0}^{m}\binom{2m}{2s}x_i^{m-s}x_j^s
\pmod p,
\end{aligned}
\]
and
\[
\begin{aligned}
u\chi(a_i^2+v a_j^2)
&\equiv u(a_i^2+v a_j^2)^n                                \\
&=\sum_{s=0}^{2m}u v^s\binom{2m}{s}x_i^{2m-s}x_j^s
\pmod p.
\end{aligned}
\]
Thus the entry is congruent to a kernel of the form in Proposition
\ref{prop:kernel}, with
\[
        \alpha_s=2\binom{2m}{2s}\quad(0\le s\le m),
        \qquad
        \beta_s=u v^s\binom{2m}{s}\quad(0\le s\le2m).
\]
Applying Proposition \ref{prop:kernel} over \(\Fp\) gives
\[
\begin{aligned}
D_{A_0}(u,v)
&\equiv
(-1)^{(2m)(2m+1)/2}
\left(\prod_{s=0}^{2m}u v^s\binom{2m}{s}\right)
\prod_{0\le i<j\le2m}(x_j-x_i)^2                              \\
&=
(-1)^{m(2m+1)}u^{2m+1}v^{m(2m+1)}
\left(\prod_{s=0}^{2m}\binom{2m}{s}\right)
\prod_{0\le i<j\le2m}(a_j^2-a_i^2)^2
\pmod p.
\end{aligned}
\]
This proves the theorem.
\end{proof}

\section{The square class and uniform extensions}\label{sec:square}

The preceding section gives more than the square class.  It gives the exact
modulo \(p\) value for every ordered half-system and every pair
\((u,v)\in\Fp^2\).  For comparison with earlier work, let
\[
        T(d,p)=\det\left[\chi(j^2+d k^2)\right]_{0\le j,k\le2m}.
\]
The two-antidiagonal factorisation shows that, in the standard
specialization,
\[
\det\left[
 \chi(j+k)+\chi(j-k)+\delta_1\chi(j^2+\delta_2k^2)
\right]_{0\le j,k\le2m}
\equiv \delta_1^{2m+1}T(\delta_2,p)\pmod p.
\]
Consequently, Sun's evaluation of the Legendre symbol of \(T(d,p)\) in
\cite{Sun2019} already implies the square-class assertion in Conjecture
4.10(i).  The purpose of Theorem \ref{thm:410-uniform} is to record the
full congruence and the uniform half-system extension.

Set
\[
        P_m=\prod_{r=0}^{2m}\binom{2m}{r}.
\]
Then
\[
        P_m
        =\frac{((2m)!)^{2m+1}}
               {\left(\prod_{r=0}^{2m}r!\right)^2},
\]
so
\[
        \chi(P_m)=\chi((2m)!)^{2m+1}.
\]
Wilson's theorem gives
\[
        -1\equiv(p-1)!\equiv((2m)!)^2\pmod p.
\]
Hence
\[
        \chi((2m)!)\equiv((2m)!)^{2m}
        =\left(((2m)!)^2\right)^m
        \equiv(-1)^m\pmod p,
\]
and therefore
\begin{equation}\label{eq:Pm-square}
        \chi(P_m)=(-1)^m.
\end{equation}
Since
\[
        \chi(2)=(-1)^{(p^2-1)/8}=(-1)^m
        \qquad(p=4m+1),
\]
we have
\begin{equation}\label{eq:2Pm-square}
        \chi(2P_m)=1.
\end{equation}
Now take \(u=\delta_1\) and \(v=\delta_2\) with
\(\delta_1,\delta_2\in\{\pm1\}\).  Since \(p\equiv1\pmod4\),
\(\chi(-1)=1\), and hence \(\chi(\delta_1)=\chi(\delta_2)=1\).  The exact
congruence in Theorem \ref{thm:410-uniform} gives
\[
\begin{aligned}
2D_{A_0}(\delta_1,\delta_2)
&\equiv
\left(\prod_{0\le i<j\le2m}(a_j^2-a_i^2)\right)^2
\cdot
2(-1)^{m(2m+1)}
\delta_1^{2m+1}\delta_2^{m(2m+1)}P_m .
\end{aligned}
\]
Taking Legendre symbols and using \eqref{eq:2Pm-square} gives
\[
        \chi(2D_{A_0}(\delta_1,\delta_2))=1.
\]
For \(A_0=(0,1,\ldots,2m)\), the squares are pairwise distinct modulo
\(p=4m+1\): if \(i^2\equiv j^2\pmod p\), then
\((i-j)(i+j)\equiv0\pmod p\), while \(|i-j|<p\) and
\(0\le i+j
\le p-1\).  Thus \(i=j\).  Corollary \ref{cor:410}
therefore follows.

The same argument gives a half-system extension.  Let
\[
        H_0=\{a_0,\ldots,a_{2m}\}\subset\Fp
\]
be any set satisfying
\[
        \Fp=\{0\}\sqcup H_0^\times\sqcup(-H_0^\times),
        \qquad H_0^\times=H_0\setminus\{0\}.
\]
Then \(a_i^2\ne a_j^2\) for \(i\ne j\), and Theorem
\ref{thm:410-uniform} yields
\[
\begin{aligned}
&\det\left[
 \leg{a_i+a_j}{p}+\leg{a_i-a_j}{p}
 +\delta_1\leg{a_i^2+
\delta_2a_j^2}{p}
 \right]_{a_i,a_j\in H_0}                                      \\
&\quad\equiv
(-1)^{m(2m+1)}
\delta_1^{2m+1}\delta_2^{m(2m+1)}P_m
\prod_{0\le i<j\le2m}(a_j^2-a_i^2)^2
\pmod p.
\end{aligned}
\]
Reordering \(H_0\) changes the Vandermonde factor by a sign, but its
square is unchanged.  Hence
\[
\chi\left(
2\det\left[
 \leg{x+y}{p}+\leg{x-y}{p}
 +\delta_1\leg{x^2+\delta_2y^2}{p}
 \right]_{x,y\in H_0}
\right)=1.
\]

The formula also includes degenerate parameters.  If \(u=0\), the matrix
has rank at most \(m+1<2m+1\), and hence \(D_{A_0}(0,v)=0\).  If
\(v=0\), the longer antidiagonal has \(\beta_s=0\) for \(s\ge1\), so
\(\prod_s\beta_s=0\), and again \(D_{A_0}(u,0)=0\).  For \(u,v\ne0\),
Theorem \ref{thm:410-uniform} shows that
\[
        D_{A_0}(u,v)\ne0
        \quad\Longleftrightarrow\quad
        \prod_{0\le i<j\le2m}(a_j^2-a_i^2)\ne0.
\]

Finally, the determinant is stable under arbitrary replacement of the
shorter antidiagonal coefficients.  If \(x_0,\ldots,x_{2m}\) are pairwise
distinct in a field, then
\[
\det\left[
\sum_{s=0}^{m}\alpha_sx_i^{m-s}x_j^s
+
\sum_{s=0}^{2m}\beta_sx_i^{2m-s}x_j^s
\right]_{0\le i,j\le2m}
=
(-1)^{m(2m+1)}
\left(\prod_{s=0}^{2m}\beta_s\right)
\prod_{i<j}(x_j-x_i)^2.
\]
Only the longer antidiagonal controls the determinant.

\section{A compressed proof of the congruence evaluation}\label{sec:compressed-congruence}

The proof of Theorem \ref{thm:410-uniform} can be summarized by the chain
\[
\begin{aligned}
M_{ij}
&\equiv
(a_i+a_j)^{2m}+(a_i-a_j)^{2m}
+u(a_i^2+v a_j^2)^{2m}                                      \\
&=
2\sum_{s=0}^{m}\binom{2m}{2s}
(a_i^2)^{m-s}(a_j^2)^s
+
u\sum_{s=0}^{2m}\binom{2m}{s}v^s
(a_i^2)^{2m-s}(a_j^2)^s                                      \\
&=
\sum_{r,s=0}^{2m}
\left(
2\binom{2m}{2s}\mathbf1_{r+s=m}
+
u v^s\binom{2m}{s}\mathbf1_{r+s=2m}
\right)x_i^r x_j^s.
\end{aligned}
\]
Hence \(M=VBV^T\), where \(V=(x_i^r)_{0\le i,r\le2m}\), and
\[
\begin{aligned}
\det M
&=(\det V)^2\det B                                           \\
&=
\left(\prod_{0\le i<j\le2m}(x_j-x_i)^2\right)
(-1)^{m(2m+1)}
\prod_{s=0}^{2m}uv^s\binom{2m}{s}                            \\
&=
(-1)^{m(2m+1)}
u^{2m+1}v^{m(2m+1)}P_m
\prod_{0\le i<j\le2m}(a_j^2-a_i^2)^2.
\end{aligned}
\]
The square-class conclusion then follows from
\[
        P_m=\frac{((2m)!)^{2m+1}}
        {\left(\prod_{r=0}^{2m}r!\right)^2},
        \qquad
        ((2m)!)^2\equiv-1\pmod p,
\]
\[
        \leg{P_m}{p}=(-1)^m,
        \qquad
        \leg{2}{p}=(-1)^m.
\]

\section{Equivalent forms and numerical checks}

For Theorem \ref{thm:48}, define
\[
        \Delta=xw-yz,
        \qquad
        R=(w+1)x-yz,
        \qquad
        S=w(1-x)+(y+1)(z+1).
\]
The determinant can be written in the compact form
\[
        D_p=P R+L(wx-(y+1)z)+LQ S.
\]
Since
\[
        R=x+\Delta,
        \qquad
        S=1+y+z+w-\Delta,
\]
this is equivalent to
\[
        D_p=LQ+Px+LQy+L(Q-1)z+LQw+(P-LQ+L)\Delta.
\]
Thus the nonzero coefficients are
\[
\begin{array}{c|c}
\text{monomial}&\text{coefficient}\\ \hline
1&LQ\\
x&P\\
y&LQ\\
z&L(Q-1)\\
w&LQ\\
xw&P-LQ+L\\
yz&-P+LQ-L
\end{array}
\]
and all other coefficients vanish.  This vanishing is a rank-two
consequence of
\[
        D_p=LQ\det(I_2+MG),
        \qquad
        G=\begin{pmatrix}LP/Q&1\\(Q-1)/Q&1\end{pmatrix}.
\]

For the first primes \(p\equiv1\pmod4\), one has
\[
\begin{array}{c|c|c|c|c}
p&L&A&B&L(B-A)\\ \hline
5&-1&2&1&1\\
13&-1&18&5&13\\
17&1&4&1&-3\\
29&-1&70&13&57\\
37&-1&882&145&737\\
41&1&32&5&-27
\end{array}
\]
Thus
\[
        \det[\chi(j-k+1)]_{0\le j,k<n}=L(B-A)
\]
gives
\[
        1,
        \quad 13,
        \quad -3,
        \quad 57,
        \quad 737,
        \quad -27
\]
for \(p=5,13,17,29,37,41\), respectively.  For example, at
\((x,y,z,w)=(2,-1,3,4)\), Theorem \ref{thm:48} gives
\[
        D_p(2,-1,3,4)
        =P(10+3)+L(8-0)+LQ(4(-1)+0)
        =13P+8L-4LQ.
\]

For Theorem \ref{thm:410-uniform}, let
\[
\mathcal D_p(\delta_1,\delta_2)=
        \det\left[
        \leg{j+k}{p}+\leg{j-k}{p}
        +\delta_1\leg{j^2+\delta_2k^2}{p}
        \right]_{0\le j,k\le2m}.
\]
Then
\[
        \mathcal D_p(\delta_1,\delta_2)
        \equiv
        C_p(\delta_1,\delta_2)V_p^2\pmod p,
\]
where
\[
        C_p(\delta_1,\delta_2)=
        (-1)^{m(2m+1)}
        \delta_1^{2m+1}\delta_2^{m(2m+1)}P_m,
        \qquad
        V_p=\prod_{0\le i<j\le2m}(j^2-i^2).
\]
Therefore
\[
        \leg{\mathcal D_p(\delta_1,\delta_2)}{p}=(-1)^m,
        \qquad
        \leg{2\mathcal D_p(\delta_1,\delta_2)}{p}=1.
\]
Equivalently,
\[
        \mathcal D_p(\delta_1,\delta_2)
        \in
        \begin{cases}
        \QR(\Fp),&p\equiv1\pmod8,\\
        \Fp^\times\setminus\QR(\Fp),&p\equiv5\pmod8,
        \end{cases}
\]
while \(2\mathcal D_p(\delta_1,\delta_2)\in\QR(\Fp)\) for all
\(p\equiv1\pmod4\).
\section*{Declaration of Generative AI and AI-Assisted Technologies in the Writing Process}
During the preparation of this work, the authors used DeepSeek to build a specialized agent for solving mathematical problems, which was employed to generate an initial proof of the main theorem. After using this tool, the authors reviewed and edited the content as needed and take full responsibility for the content of the published article.

\end{document}